\newcommand{\R}{{\mathbb R}}
\newcommand{\aaa}{{\mathcal A}}
\newcommand{\ff}{\mathcal F}
\theoremstyle{plain}
\newtheorem{thm}{Theorem}
\newtheorem{cla}{Claim}
\newtheorem{pro}{Problem}
\theoremstyle{definition}
\title{Intersection theorems for triangles}
\author{Peter Frankl,\footnote{Renyi Institute, Budapest, Hungary and Moscow Institute of Physics and Technology, Russia.} Andreas Holmsen,\footnote{Department of Mathematical Sciences, KAIST, Daejeon, South Korea.  Research supported by NRF grant No.~2020R1F1A1A0104849011.}
Andrey Kupavskii\footnote{G-SCOP, CNRS, University Grenoble-Alpes, France and Moscow Institute of Physics and Technology, Russia; Email: {\tt kupavskii@yandex.ru}.  The first and the third author acknowledge the financial support from the Russian Government in the framework of MegaGrant no 075-15-2019-1926.}}
\date{}
\begin{document}
\maketitle
\begin{abstract}
    Given a family of sets on the plane, we say that the family is {\it intersecting} if for any two sets from the family their interiors intersect. In this paper, we study intersecting families of triangles with vertices in a given set of points. In particular, we show that if a set $P$ of $n$ points is in convex position, then the largest intersecting family of triangles with vertices in $P$ contains at most $(\frac{1}{4}+o(1))\binom{n}{3}$ triangles. 
\end{abstract}
Let $P$ be a set of $n$ points in general position in the plane (that is, no three points are collinear). A triangle is {\em spanned} by $P$ if its vertices belong to $P$, and we say that two triangles spanned by $P$ are {\em intersecting} if their interiors intersect. In this note, we study {\em intersecting families} of triangles spanned by a point set $P$, where a family is  intersecting if any two of its members are intersecting. 

\paragraph{Background.}
It is a well-known result due to Boros and F\"uredi \cite{boros} that for every set $P$ of $n$ points in the plane in general position there exists a point of depth at least  $\frac{2}{9} \binom{n}{3}$, 
where the {\it depth} of a point $x$ is the number of triangles spanned by $P$ that contain $x$ in their interior. 
For an alternative proof of this fact see Bukh \cite{bukh}, and for a construction of a point set for which this bound is attained see Bukh, Matou\v sek, Nivasch \cite{nivasch}. 
In general, it is known that there always exists a point of depth at least $\alpha \binom{n}{d+1}$ for a set of $n$ points in $\R^d$, 
where $\alpha=\alpha(d)>0$ is a function depending only on $d$. 
This result was originally proved by B\'ar\'any \cite{barany} who showed the bound $\alpha>d^{-d}$, 
while the best known bound $\alpha \geq \frac{2d}{(d+1)!(d+1)}$ is due to Gromov \cite{gromov}. 
(See Karasev \cite{karasev} for a simple proof of Gromov's lower bound.) 

At the same time, we know the exact upper bound for the maximum depth of a point. For an integer $n\geq 3$, put $F(n):= \binom{\lceil\frac{n+2}{2}\rceil}{3} + \binom{\lfloor\frac{n+2}2\rfloor}{3}$.
Note that $F(n)$ is strictly increasing and $\lim_{n\to \infty} \frac{F(n)}{\binom{n}{3}} = \frac{1}{4}$.
It is a known fact that if $P$ is a set of $n$ points in the plane, then no point is contained in the interior of more than $F(n)$ triangles spanned by $P$. This can be deduced from the ``upper bound theorem'' for convex polytopes using Gale duality (see 
\cite[Remark 4.2]{uli}).

Another important motivation for our studies is the celebrated Erd\H os--Ko--Rado theorem \cite{EKR}. 
It states that a family of $k$-element subsets of an $n$-element set in which any two sets intersect has size at most $\binom{n-1}{k-1}$, provided $n\ge 2k$. 
Importantly, $\binom{n-1}{k-1}$ is the size of the family of all sets containing a fixed point, which trivially is an intersecting family (any subfamily of such a family is called a {\it trivially intersecting family}). 
A similar phenomenon occurs in many other settings, such as permutations \cite{perms}, vector spaces \cite{hsieh, vector} etc.:
{\em the largest intersecting substructure is the trivial one.} 
The goal for this note is to initiate the exploration of this phenomenon in a geometric setting.

\paragraph{Results.} 
Our first theorem deals with a seemingly restricted setting of intersecting families of triangles spanned by the vertices of a regular $n$-gon, $n\geq 3$. Let $S^1$ denote the unit circle centered at the origin in $\mathbb{R}^2$ and $K_n$ be a regular $n$-gon inscribed in $S^1$. Let $x$ be a point very close to the center of $S^1$ which is in general position with respect to the vertices of $K_n.$


\begin{thm}\label{thm1}
Let $V$ be the vertex set of $K_n$. Then the size of the largest intersecting family of  triangles spanned by $V$ is equal to the number of triangles spanned by $V$ that contain $x$.
\end{thm}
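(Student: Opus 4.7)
\emph{Lower bound.} Define $\mathcal{F}_x := \{T \in \binom{V}{3} : x \in \mathrm{int}(T)\}$. Any two members share the point $x$, so $\mathcal{F}_x$ is intersecting and yields the lower bound.

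\emph{Upper bound strategy.} I plan to partition $\binom{V}{3}$ into exactly $|\mathcal{F}_x|$ groups, each group consisting of pairwise interior-disjoint triangles. An intersecting family can then contain at most one triangle per group, giving $|\mathcal{F}| \leq |\mathcal{F}_x|$. Equivalently, this exhibits a proper coloring of the intersection graph of triangles by $|\mathcal{F}_x|$ colors that matches the clique $\mathcal{F}_x$. Using the ``upper bound theorem'' cited in the introduction, $|\mathcal{F}_x|=F(n)$ for $x$ in general position near the center of $K_n$, so the target is a partition into $F(n)$ color classes.

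\emph{Construction.} The central combinatorial tool is the description of interior-disjointness for inscribed triangles: two triangles with vertices on $S^1$ have disjoint interiors iff they are either vertex-disjoint and separable by a chord, or share an edge with their third vertices on opposite sides, or share a vertex with disjoint angular sectors there. Using the $D_n$-symmetry of $K_n$, I would anchor one group $\mathcal{G}_C$ at each central triangle $C \in \mathcal{F}_x$ and include with $C$ non-central triangles drawn from the three arcs (``ears'') of $V \setminus C$. For $n=6$ this can be done by hand: the two centrals of arc type $(2,2,2)$ anchor size-$4$ groups (the full triangulations of $K_6$ through $\{0,2,4\}$ and $\{1,3,5\}$), while each of the remaining six centrals of arc type $(1,2,3)$ is paired with a single non-central triangle sharing the opposite diameter on the other side. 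Pairwise disjointness within each group is verified directly from the above characterization.

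\emph{Main obstacle.} The hard part is defining the groups uniformly in $n$ and in the arc type $(\alpha,\beta,\gamma)$ of $C$ so that the classes $\{\mathcal{G}_C\}_{C\in\mathcal{F}_x}$ partition $\binom{V}{3}$ exactly once. I expect this to reduce to the combinatorial identity $\binom{n}{3} = \sum_{C\in\mathcal{F}_x}|\mathcal{G}_C|$, which should follow from the closed form $F(n) = \binom{\lceil(n+2)/2\rceil}{3} + \binom{\lfloor(n+2)/2\rfloor}{3}$ together with the count of triangles by arc type. The delicate point is prescribing, for triangles of arc type containing a $1$, which ``canonical'' group they belong to, since several centrals could in principle claim them.
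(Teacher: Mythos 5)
Your lower bound is fine, and the reduction of the upper bound to exhibiting a partition of $\binom{V}{3}$ into $F(n)$ classes of pairwise interior-disjoint triangles is a legitimate strategy in principle. But the proposal does not carry it out: you verify the partition only for $n=6$, and you explicitly leave open the ``main obstacle'' of defining the classes $\mathcal G_C$ uniformly in $n$ so that every triangle is claimed exactly once. That open step \emph{is} the theorem. Note also that what you are after is strictly stronger than the statement to be proved: a partition into $F(n)$ independent sets is a proper $F(n)$-coloring of the intersection graph, i.e.\ the assertion that its chromatic number equals its clique number, whereas the theorem only asks for the clique number. The numerical identity $\binom{n}{3}=\sum_C|\mathcal G_C|$ is necessary but nowhere near sufficient for such a partition to exist, and the ``delicate point'' you flag --- which central triangle canonically claims a given non-central one --- is exactly where the construction could fail.

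The paper sidesteps this difficulty by not insisting on a partition. Its first proof builds, for each pair of edges sharing a vertex, a ``strip'': a triangulation of a convex polygon inscribed in $K_n$, hence a set of pairwise interior-disjoint triangles. These strips overlap, but in a controlled way: every nontrivial strip contains exactly one triangle through $x$, every triangle lies in at most two nontrivial strips, and every triangle \emph{not} containing $x$ lies in exactly two. An intersecting family meets each strip at most once, and a double-counting of incidences between triangles and nontrivial strips --- comparing the multiplicities with which $\ff$ and the central family $\mathcal C$ are covered --- yields $|\ff|\le|\mathcal C|$ without ever producing a genuine partition. If you want to salvage your approach, the realistic fix is to relax ``partition'' to ``cover with prescribed multiplicities'' in exactly this way; alternatively, the paper's second proof (a compression argument replacing the triangle farthest from $x$ by triangles on the other side of its nearest edge) and its inductive proof of Theorem~\ref{thm3} give two other routes that avoid the coloring question entirely. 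As written, your argument establishes the theorem only for $n\le 6$.
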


It is not difficult to verify that the number of triangles spanned by $V$ that contain $x$ is equal to $F(n)$. We do so in the beginning of Section~\ref{proof1}. 

Now let us take any set $P$ of $n$ points in convex position and let $\ff$ be an intersecting family of triangles spanned by $P$. There is a natural cyclic order on $P$, and whether or not two triangles spanned by $P$ intersect depends only on the relative positions of their vertices with respect to this cyclic ordering. Now map the points of $P$ to the vertices $V$ of $K_n$ by an order preserving map. Thus $\mathcal{F}$ is mapped to an intersecting family $\mathcal{F}'$ spanned by $V$. Therefore, Theorem \ref{thm1} implies the following theorem.

\begin{thm}\label{thm3}  
Let $P$ be a set of $n\geq 3$ points in convex position in the plane. Then no intersecting family of triangles spanned by $P$ can contain more than $F(n)$ triangles. On the other hand, there exists an intersecting family of triangles spanned by $P$ that has exactly $F(n)$ triangles.
\end{thm}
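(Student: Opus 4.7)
The plan is to reduce both the upper and lower bounds to Theorem \ref{thm1} by means of a cyclic-order-preserving bijection between $P$ and the vertex set $V$ of $K_n$, as already hinted at in the paragraph preceding the statement. The only geometric input needed beyond Theorem \ref{thm1} is the observation that the intersection relation on triangles with vertices in convex position is purely combinatorial.

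The first step I would carry out is to formalize this observation as a small lemma: if $Q$ is any finite set of points in convex position, then two triangles spanned by $Q$ have intersecting interiors if and only if their vertex sets \emph{cannot} be separated by a chord of the convex hull, a condition depending only on the cyclic order of the six (or fewer) vertices involved. I would verify this by splitting into cases according to the number of shared vertices. If the triangles share two vertices, intersection is decided by which side of the shared edge the third vertices lie on, which is determined by the cyclic order; if they share one vertex, one checks that their interiors meet iff the other two pairs of vertices interleave around that common point; if their vertex sets are disjoint, intersection is equivalent to the two triples interleaving around the convex hull (equivalently, to the impossibility of a separating chord). In each case the condition is invariant under any order-preserving bijection.

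With this lemma in hand, fix any order-preserving bijection $\phi : P \to V$; such a $\phi$ exists because $P$ and $V$ are both cyclically ordered $n$-point sets in convex position. The map $\phi$ induces a bijection between triangles spanned by $P$ and triangles spanned by $V$ which, by the lemma, sends intersecting pairs to intersecting pairs and non-intersecting pairs to non-intersecting pairs. Hence $\phi$ induces a size-preserving bijection between intersecting families on $P$ and intersecting families on $V$. For the upper bound, any intersecting family $\mathcal{F}$ of triangles spanned by $P$ maps to an intersecting family $\phi(\mathcal{F})$ spanned by $V$, and Theorem \ref{thm1} gives $|\mathcal{F}| = |\phi(\mathcal{F})| \le F(n)$. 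For the lower bound, Theorem \ref{thm1} (together with the count mentioned below it) supplies an intersecting family of exactly $F(n)$ triangles spanned by $V$ — namely, all triangles containing the distinguished point $x$ — and pulling this family back through $\phi^{-1}$ yields an intersecting family of the same size spanned by $P$.

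I expect the only non-routine step to be the case analysis establishing that intersection of triangles in convex position is an invariant of the cyclic order; the rest is a transparent application of Theorem \ref{thm1}. In particular, nothing in the argument requires re-proving or refining the extremal count $F(n)$, which is imported wholesale from Theorem \ref{thm1} and the verification immediately following it.
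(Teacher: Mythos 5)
Your proposal is correct and is essentially the same argument the paper itself uses to derive Theorem~\ref{thm3}: in the paragraph preceding the statement, the authors map $P$ to the vertices of $K_n$ by an order-preserving bijection, note that intersection of triangles spanned by points in convex position depends only on the cyclic order of the vertices, and invoke Theorem~\ref{thm1} for both the upper bound and the extremal example. (The paper additionally supplies an independent inductive proof of Theorem~\ref{thm3} in Section~\ref{proof2}, but the reduction you describe is the one the paper presents as the primary derivation, and your case analysis fills in the combinatorial-invariance lemma the paper leaves implicit.)
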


In the other direction, Theorem \ref{thm1} (together with the remark after the theorem) clearly follows from Theorem \ref{thm3}. 
In Sections~\ref{proof1} and~\ref{proof3}, we will give two proofs of Theorem \ref{thm1} (one by double-counting and one using a certain inductive procedure on the family), which do not depend on the knowledge of the function $F(k)$. In Section~\ref{proof2}, we will give yet another proof of Theorem~\ref{thm3} by induction on $n$.

The analogous question for general point sets is very interesting and is discussed in the last section.   

Interestingly, it is not always the case that the largest intersecting family of triangles in a point set is a trivial one. Indeed, an easy conclusion of Theorem~\ref{thm1} and the two paragraphs that come after is that, whenever we have a set of $n$ points in convex position, it contains an intersecting family of triangles $\mathcal F$ of size exactly $F(n)$: this family is simply the `image' of the family of triangles in $K_n$ that contain $x.$ In particular, this shows that Theorem~\ref{thm3} is tight. However, this family need not be a trivial family. Indeed, the set of points constructed by Bukh, Matou{\v s}ek, and Nivasch \cite[Theorem 1.3]{nivasch} is in convex position, 
and the maximum depth of a point in this set is $\big(\frac{2}{9}+o(1)\big) \binom{n}{3}$.\footnote{This set of points consists of three `clusters' of points, each located aroud a vertex of an equilateral triangle}

\vspace{1ex}

Our last theorem gives a continuous counterpart of the statement of Theorem~\ref{thm3}.
Consider the set $\mathfrak{T} = S^1\times S^1\times S^1$ of all ordered triples of points inscribed in the unit circle $S^1$. 
We associate 
a triple $(p,q,r) \in \mathfrak T$ with the triangle spanned by the points $p,q,r$. (Some triangles will be degenerate, but the set of degenerate triangles has measure 0).
By a simple application of Theorem \ref{thm3} we have the following. 

\begin{thm}\label{thm4}
 Let $\nu$ be a probability measure on $S^1$ and let $\mu_s = \nu\times \nu\times \nu$ be the corresponding product measure on $\mathfrak T$.  Let $\ff\subset \mathfrak T$ be an intersecting measurable family of triangles. Then $\mu_s(\ff)\le \frac 14$.
\end{thm}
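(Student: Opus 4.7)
The plan is to deduce the theorem from Theorem \ref{thm3} via a simple random sampling argument. First, I replace $\ff$ by its symmetrization $\ff^{*} := \bigcup_{\sigma} \sigma(\ff)$, where $\sigma$ ranges over the six coordinate permutations of $\mathfrak T$. Since whether two triangles have intersecting interiors depends only on their vertex sets and not on the ordering, $\ff^{*}$ is again intersecting and measurable; because $\ff\subset\ff^{*}$ we have $\mu_s(\ff^{*})\ge \mu_s(\ff)$, so it suffices to bound $\mu_s(\ff^{*})$.

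Fix a large integer $n$ and draw $X_1,\dots,X_n$ i.i.d.\ from $\nu$. Let $Y$ denote the number of unordered index triples $\{i,j,k\}\subset[n]$ such that $(X_i,X_j,X_k)\in \ff^{*}$; this is well-defined by the permutation invariance of $\ff^{*}$. Linearity of expectation gives $\mathbb{E}[Y]=\binom{n}{3}\mu_s(\ff^{*})$, and the heart of the matter is the deterministic bound $Y\le F(n)$ for every realization of $X_1,\dots,X_n$. When the $X_i$'s are pairwise distinct---which happens almost surely if $\nu$ is non-atomic---they form a set of $n$ points in convex position on $S^1$, the triples counted by $Y$ form an intersecting family of triangles on this set, and Theorem \ref{thm3} applies directly.

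If $\nu$ has atoms, some $X_i$'s may coincide. Here I first observe that an intersecting family containing more than one triangle cannot contain a degenerate one, so each triple counted by $Y$ has three distinct values. To dispose of the coincidences I would replace each maximal block of coincident $X_i$'s by distinct points lying in a tiny arc around the common value, obtaining distinct points $Z_1,\dots,Z_n$ on $S^1$. Because ``intersecting interiors'' is an open condition on the space of non-degenerate triangles and only finitely many pairs of $X$-triangles need to remain intersecting, for a sufficiently small perturbation the family of triangles on $Z_1,\dots,Z_n$ corresponding to the triples counted by $Y$ is intersecting; Theorem \ref{thm3} applied to $Z_1,\dots,Z_n$ then yields $Y\le F(n)$. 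Combining, $\binom{n}{3}\mu_s(\ff^{*})\le F(n)$, and letting $n\to\infty$ with $F(n)/\binom{n}{3}\to 1/4$ gives $\mu_s(\ff)\le \mu_s(\ff^{*})\le 1/4$. The only nontrivial step is the perturbation argument that handles atoms of $\nu$; the remainder is a routine first-moment computation.
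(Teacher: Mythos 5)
Your proposal is correct and follows essentially the same route as the paper: sample $n$ i.i.d.\ points from $\nu$, apply Theorem~\ref{thm3} to the resulting convex-position point set, and conclude by a first-moment computation that $\mu_s(\ff)\le F(n)/\binom{n}{3}\to 1/4$. The only difference is that you spell out two technicalities the paper elides (passing to the permutation-symmetrization $\ff^*$ to relate ordered triples to unordered ones, and perturbing coincident sample points when $\nu$ has atoms), both of which you handle correctly.
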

\begin{proof}
Fix an intersecting family $\ff\subset \mathfrak T$. Let us generate a random triangle from $\mathfrak T$ as follows: first, choose a set $P$ of $n$ points on $S^1$, where each point is chosen independently according to the distribution $\nu$. Second, choose a triple of points from $P$ uniformly at random from the set of all triples. It is easy to see that the obtained distribution on $\mathfrak T$ is the same as the distribution $\mu_s$.

Using Theorem~\ref{thm3}, any intersecting family of triangles on $P$ has measure at most $\frac{1}{4} +o(1)$ as $n\to \infty$. Thus,  $\big|\ff\cap \binom{P}{3}\big|\le (\frac{1}{4}+o(1)) \binom{n}{3}.$ Since this inequality is valid for any set $P$ of $n$ points, the theorem follows. 
\end{proof}

\section{Proofs}

\subsection{First proof of Theorem 1} \label{proof1} 
In this subsection, all edges and triangles are spanned by $V$, 
that is, their endpoints (vertices) are in $V$. 

Let us start by showing that the number of triangles containing $x$ is $F(n).$ First consider the case of odd $n$.  Choose the first vertex $v$ of the triangle in $n$ ways. Then the line $vx$ has $\frac{n-1}{2}$ points on each side. 
In order to contain $x$ in the interior, (i) the remaining vertices $v',v''$ of the triangle must come from different sides of the line $vx$; (ii) out of the two arcs connecting $v'$ and $v''$ the one  containing $v$ must be longer. 
There are $i+1$ choices of $v',v''$ so that the shorter arc has exactly $i$ vertices of $K_n$ in the interior, 
and $i\leq \frac{n-3}{2}$ by condition (ii).
Summing over $i$ (and dividing by $3$ because we count each triangle $3$ times), we get that the number of triangles containing $x$ equals
\[\textstyle{\frac {n}{3}  \big( 1+2 + \dots + \frac{n-1}{2} \big) = \frac{n}{3} \binom{\frac{n+1}{2}}{2} = \frac{(n-1)n(n+1)}{24} = \frac{(n-1)(n-3)(n+1)}{48} + \frac{(n-1)(n+3)(n+1)}{48} = F(n).}\]

If $n$ is even then we do the same count, but first excluding the triangles with one side being the diameter of $S^1.$ The number of such triangles equals $\frac{n}{3} \big( 0+1+\ldots+\frac{n-4}{2} \big) = \frac{n(n-2)(n-4)}{24}$. Next, out of triangles with one side being the diameter exactly half of them contain $x$, which equals $\frac {n(n-2)}4.$ Summing up, we conclude that the number of triangles containing $x$ is $\frac{n(n-2)(n+2)}{24} = F(n)$.\\

We now start the proof of Theorem \ref{thm1}. First some definitions. Let $e_1$ and $e_2$ be a pair of edges that share a common vertex. In other words, $e_1$ and $e_2$ are the edges of a path on three vertices. (See Figure \ref{fig1}, left.) We add edges parallel to $e_1$ or $e_2$ to the endpoints of the current path alternately to obtain a longer path. We repeat this process until we reach a unique maximal path called the {\em path defined by  $e_1$ and $e_2$}. (See Figure \ref{fig1}, center.) 
Taking the convex hull of consecutive pairs of edges on the path defined by $e_1$ and $e_2$, we get a family of triangles that we call {\em the strip of $e_1$ and $e_2$}. The strip of $e_1$ and $e_2$ is {\em nontrivial} if it consists of at least two triangles. Note that the union of all triangles from one strip is a triangulation of a convex polygon. In particular, the triangles from one strip have pairwise disjoint interiors. (See Figure \ref{fig1}, right.)
The endpoints of the path $e_1e_2$ split the circle into two open arcs. Out of those two, denote by $\gamma$ the {\it open} arc that does not contain the midpoint of the path $e_1e_2$. Finally,  let $\text{step}(e_1, e_2)$ denote the number of vertices of $V$ lying on $\gamma$.

\begin{figure}
  \centering
\begin{tikzpicture}[scale = 0.43]

\begin{scope}
\filldraw[red] (0,0) circle (3pt);
\draw (0,0) circle  (4);
\draw[thick,blue] ({360/21}:4) -- ({360*4/21}:4) -- ({360*18/21}:4);
\foreach \x in {0,...,20}
\filldraw ({360*\x/21}:4) circle (3pt);
\end{scope}

\begin{scope}[xshift = 12cm]
\filldraw[red] (0,0) circle (3pt);
\draw (0,0) circle  (4);
\draw[thick,blue] ({360/21}:4) -- ({360*4/21}:4) -- ({360*18/21}:4) --  ({360*8/21}:4) -- ({360*14/21}:4) -- ({360*12/21}:4);
\foreach \x in {0,...,20}
\filldraw ({360*\x/21}:4) circle (3pt);
\end{scope}

\begin{scope}[xshift = 24cm]
\draw (0,0) circle  (4);
\draw[thick,blue] ({360/21}:4) -- ({360*4/21}:4) -- ({360*18/21}:4) --  ({360*8/21}:4) -- ({360*14/21}:4) -- ({360*12/21}:4);

\fill[blue, opacity = .25] 
({360/21}:4) -- ({360*4/21}:4) -- ({360*18/21}:4)
({360*4/21}:4) -- ({360*18/21}:4) --  ({360*8/21}:4)
({360*18/21}:4) --  ({360*8/21}:4) -- ({360*14/21}:4)
({360*8/21}:4) -- ({360*14/21}:4) -- ({360*12/21}:4)
;
\foreach \x in {0,...,20}
\filldraw ({360*\x/21}:4) circle (3pt);
\filldraw[red] (0,0) circle (3pt);

\end{scope}
\end{tikzpicture}
  \caption{Definition of a strip. On the left: edges $e_1$ and $e_2$ sharing an endpoint. In the center: the path defined by $e_1$ and $e_2$, where $\text{step}(e_1,e_2) = 3$. On the right: the  strip of $e_1$ and $e_2$.}\label{fig1}
\end{figure}
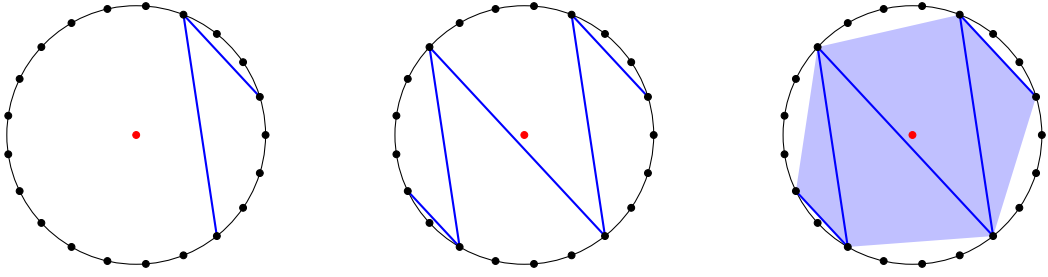

\begin{cla}\label{cla1} In each nontrivial strip there is exactly one triangle that contains $x$ in its interior.
\end{cla}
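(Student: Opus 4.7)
Plan.

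The strip's triangles partition the convex polygon $Q=\mathrm{conv}\{v_0,\ldots,v_k\}$, so $x$ lies in the interior of at most one strip triangle. It therefore suffices to show $x\in\mathrm{int}(Q)$, and since $x$ is very close to the center $O$ of $S^1$ and in general position (hence off every chord), it is enough to show $O\in\mathrm{int}(Q)$. This is equivalent to the statement that no closed half-circle of $S^1$ contains all the path vertices — i.e., that the maximum arc-gap between cyclically consecutive path vertices on $S^1$ is strictly less than $n/2$ positions.

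I will prove the half-circle statement by contradiction, using the maximality of the path together with the fact that two chords of $K_n$ are parallel iff their endpoint-sums agree modulo $n$: the parallel-to-$e_i$ edge at a vertex $v\in V$ (when it exists) is the chord from $v$ to $\sigma_i-v\pmod n$, where $\sigma_i$ is the endpoint-sum of $e_i$; note that $\sigma_1\ne\sigma_2$ since $e_1\ne e_2$ share a vertex. Assume for contradiction that all path vertices lie in an arc $A$ of length $s\le n/2$ positions with extreme vertices $L,R\in V$, and split into cases according to whether each of $L,R$ is internal to the path (both parallel classes appear as incident path-edges) or is a path endpoint. If both $L,R$ are internal, then both parallel neighbors of $L$ must land in $A$, forcing $\sigma_1,\sigma_2\in\{1,\ldots,s\}$; the symmetric condition at $R$ forces $\sigma_1,\sigma_2\in\{s,\ldots,2s\}\pmod n$; intersecting gives $\sigma_1=\sigma_2=s$, contradicting $\sigma_1\ne\sigma_2$. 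If at least one of $L,R$ (say $R$) is a path endpoint, then maximality forbids adding the missing parallel-class edge at $R$; but the candidate far endpoint $w=\sigma_j-R\pmod n$ either coincides with $L$ (which via an analogous parallel argument again forces $\sigma_1=\sigma_2$) or lies in the big-gap arc $S^1\setminus A$, and in the latter subcase the chord $(R,w)$ is separated from every path edge by the chord $(L,R)$ — since all path edges have both endpoints in $A$ while $w$ does not — so no crossing occurs and the extension is in fact legal, a contradiction. The case where both $L$ and $R$ are path endpoints is handled by the same extension argument applied at $L$.

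The main obstacle I anticipate is the non-crossing verification in the endpoint subcases, which uses the standard chord-crossing criterion (two chords cross iff the endpoints of one separate those of the other on $S^1$) together with the observation that any chord from an endpoint of $A$ to a point in $S^1\setminus A$ lies entirely on the opposite side of the chord $(L,R)$ from every chord whose endpoints both lie in $A$. Once this geometric fact is in hand, the path vertices must span more than a half-circle, so $O\in\mathrm{int}(Q)$; the general position of $x$ then ensures $x\in\mathrm{int}(Q)$ and that $x$ avoids every triangulation edge, and we conclude that $x$ lies in the interior of exactly one triangle of the strip, as required.
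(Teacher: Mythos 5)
Your reduction is the same as the paper's: the strip triangles tile a convex polygon $Q=\mathrm{conv}(W)$ (where $W$ is the set of path vertices), so it suffices to show that the center lies in the interior of $Q$, i.e.\ that $W$ is not contained in a closed half-circle. The paper gets this in two lines from a single consequence of maximality: every open arc of $S^1\setminus W$ between circularly consecutive path vertices contains at most $\mathrm{step}(e_1,e_2)<\lfloor\frac{n-3}{2}\rfloor$ points of $V$, whereas a half-plane separating $x$ from $W$ would force at least $\lfloor\frac{n-2}{2}\rfloor$ points of $V$ into one such arc. Your Case~A (both extreme vertices of the covering arc internal, forcing $\sigma_1=\sigma_2$) is correct and is a nice self-contained argument, but your Case~B has two genuine gaps.

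First, your criterion for when the path can be extended at an endpoint $R$ is wrong. You conclude ``the extension is in fact legal'' from the fact that the candidate chord $(R,w)$ crosses no existing path edge. But non-crossing is not the stopping criterion of the construction: the new edge must produce a triangle whose interior is disjoint from the existing strip, which forces $w$ to lie on the \emph{specific} open arc between $R$ and its path-neighbour $R'$ on the side of the chord $RR'$ away from $Q$. A parallel chord can cross nothing and still be an illegal extension. The paper's own Figure~1 shows this: for $n=21$ with path $1\,4\,18\,8\,14\,12$, the chord $\{0,1\}$ is parallel to $e_2$ and crosses no path edge, yet the maximal path does not extend to $0$, because the triangle $\{0,1,4\}$ would overlap $\{1,4,18\}$. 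So deriving a contradiction with maximality from ``no crossing occurs'' is not valid; you would need to show $w$ lands in the one admissible gap. Second, the dichotomy ``$w=L$ or $w\in S^1\setminus A$'' is asserted but not proved, and it is not automatic: when both $L$ and $R$ are path endpoints the path may miss the \emph{same} parallel class at both ends (this happens whenever the path has an odd number of edges, as in Figure~1), in which case the analysis of the edges present at $L$ and $R$ constrains only $\sigma_i$ and says nothing about $\sigma_j$, so a priori $w=\sigma_j-R$ can lie strictly inside $A$; the degenerate possibility $2R\equiv\sigma_j\pmod n$ (no parallel chord through $R$ at all) is also unaddressed. Both gaps disappear if you replace the case analysis by the paper's gap-counting argument, which is where I would steer you.
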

\begin{proof} Consider a pair of edges $e_1$ and $e_2$ that share a common vertex. Let $a_1$ denote the number of vertices of $V$ that lie on the open circular arc bounded by $e_1$ on the side opposite to $e_2$. Define $a_2$ analogously. The $2$-path $e_1e_2$ can be extended to a nontrivial strip if and only if $\max\{a_1,a_2\} > \text{step}(e_1, e_2)$.
Thus, \[|V| = n = a_1 +a_2 +\text{step}(e_1, e_2) +3 > 2\cdot\text{step}(e_1,e_2) + 3.\]
We conclude that  if $e_1$ and $e_2$ define a nontrivial strip, then $\text{step}(e_1, e_2) < \big\lfloor \frac{n-3}2 \big\rfloor$. 

We now show that every nontrivial strip contains $x$. Let $W$ be the vertices of the path defined by $e_1$ and $e_2$. If we delete the points of $W$ from the unit circle, then the remaining set  $S^1\setminus W$ is a disjoint union of open circular arcs. By the maximality of the path determined by $e_1$ and $e_2$, each of these open arcs contains at most $\text{step}(e_1,e_2)$ points from $V \setminus W$. If $x$ is not contained in the the convex hull of $W$, then the center can be separated from $W$ by some straight line. The side of this line opposite to the one containing $W$ contains at least $\lfloor\frac {n-2}2\rfloor$ points from $V\setminus W$ (since $V$ is uniformly distributed and $x$ is sufficiently close to the origin). But this contradicts the observation that each open arc of $S^1\setminus W$ contains at most $\text{step}(e_1,e_2)<\lfloor \frac{n-3}2 \rfloor$ points. This shows that the union of the triangles of a nontrivial strip always contains the center. Clearly, the center must be contained in the interior of exactly one of its triangles since the union of the triangles of a strip form a triangulation of a convex polygon. (Note that, by our choice of $x$, it is not on the boundary of any triangle spanned by $V$.)
\end{proof}

\begin{cla}\label{cla2}
Every triangle spanned by $V$ is contained in at most two nontrivial strips. Moreover, any triangle not containing $x$ is contained in exactly two nontrivial strips.
\end{cla}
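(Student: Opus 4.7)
My plan is to parameterize the strips that potentially contain a given triangle $T = abc$ by the vertices of $T$, translate ``nontriviality'' of each such strip into a simple inequality on the three arc-sizes determined by $T$, and then handle the two parts separately.

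First I would label the vertices of $T$ so that they appear on $S^1$ in cyclic order, and let $\alpha, \beta, \gamma$ denote the numbers of vertices of $V\setminus\{a,b,c\}$ on the three open arcs bounded by $\{a,b\}$, $\{b,c\}$, $\{c,a\}$ respectively, each arc chosen so as not to contain the third vertex of $T$; then $\alpha + \beta + \gamma = n - 3$. Next I would observe that $T$ belongs to exactly the three strips defined by the three pairs of its own edges sharing a vertex. Indeed, no two edges of a triangle inscribed in $K_n$ are parallel (a short index computation on a regular $n$-gon), so any strip containing $T$ must have two edges of $T$ as consecutive edges of its maximal path, and these must share a vertex of $T$. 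Applying the formulas from the proof of Claim~1 to the pair $(ab, ac)$ at $a$, one gets $a_1 = \alpha$, $a_2 = \gamma$ and $\text{step}(ab, ac) = \beta$, so this strip is nontrivial iff $\max(\alpha, \gamma) > \beta$; by symmetry the strip at $b$ is nontrivial iff $\max(\alpha, \beta) > \gamma$, and the strip at $c$ iff $\max(\beta, \gamma) > \alpha$.

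For the first part (at most two nontrivial), I would argue directly: if all three conditions held, then taking WLOG $\alpha = \max(\alpha, \beta, \gamma)$, the nontriviality condition at $c$ would force $\max(\beta, \gamma) > \alpha$, contradicting the choice of $\alpha$.

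For the second part (exactly two whenever $T$ does not contain $x$), I would use that $x$ lies very close to the origin and is in general position, so if $T$ does not contain $x$ then $T$ does not strictly contain the center of $S^1$; hence at least one arc of $T$ has angular measure at least $\pi$. Taking WLOG the arc with $\alpha$ interior points to be such a one, we get $\alpha + 1 \ge n/2$, which by integer arithmetic yields $2\alpha \ge n - 2$ (uniformly in the parity of $n$), i.e., $\alpha > n - 3 - \alpha = \beta + \gamma$. Consequently $\alpha$ strictly exceeds both $\beta$ and $\gamma$, so the strips at $a$ and $b$ are nontrivial while the strip at $c$ has $\max(\beta, \gamma) \le \beta + \gamma < \alpha$ and is trivial. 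The only subtlety I anticipate is verifying that, when $n$ is even, the right-triangle boundary case (one arc equals $\pi$ exactly) is also captured by the hypothesis $T \not\ni x$ — this occurs precisely when $x$ sits on the exterior side of the diameter edge, and the same inequality $2\alpha \ge n-2$ still holds; this, together with the bookkeeping identifying $(a_1, a_2, \text{step})$ with $(\alpha, \beta, \gamma)$ for each of the three pairs, constitutes the only real care required.
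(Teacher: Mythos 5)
Your proof is correct and follows essentially the same route as the paper's: you parameterize the three candidate strips by the pairs of edges of the triangle, apply the nontriviality criterion $\max\{a_{i_1},a_{i_2}\}>a_{i_3}$ inherited from Claim~1, note that the pair formed by the two smaller arcs always yields a trivial strip, and show that a triangle missing $x$ has a strictly dominant arc. The only difference is that you spell out the geometric justification (an arc of measure at least $\pi$, hence $\alpha>\beta+\gamma$) for the strict dominance $a_1>a_2\ge a_3$ that the paper merely asserts.
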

\begin{proof}
Clearly, every triangle is contained in three strips, one for each of the three possible pairs of its edges. We need to show that at least one of these is trivial. Let $e_1, e_2, e_3$ be the edges of a triangle, and let $a_1\ge a_2\ge a_3$ denote the numbers of points on the open arcs bounded by the three sides $e_1, e_2, e_3$, respectively. Recall that, for $\{i_1,i_2,i_3\} = \{1,2,3\}$, the $2$-path $e_{i_1}e_{i_2}$ can be extended to a nontrivial strip if and only if $\max\{a_{i_1},a_{i_2}\} > a_{i_3}$. Since $a_1\ge \max\{a_2,a_3\}$, it should be clear that the strip defined by the pair $e_2$ and $e_3$ consists of only one triangle. 
This proves the first part of the statement.

To show the validity of second part, it is enough to notice that if a triangle does not contain $x$  then, in the notation above, we have $a_1> a_2 \geq a_3$, and thus we can produce a non-trivial strip starting from both $e_1,e_2$ and $e_1,e_3$. 
\end{proof}

The proof of the theorem is now concluded by a double-counting argument. 
Let $\ff$ be an intersecting family of triangles spanned by $V$.  
Then, clearly, for each non-trivial strip $\mathcal S$ we have $|\mathcal S\cap \ff| \le 1$ since the  triangles in each strip have pairwise disjoint interiors. 
Let $\mathcal C$ denote the family of triangles containing $x$ and let $\mathcal W$ denote the collection of all non-trivial strips. Using Claim~\ref{cla1}, we get
\begin{equation}\label{eq1} \sum_{\mathcal S\in \mathcal W}|\mathcal S\cap \ff|\le |\mathcal W|=\sum_{\mathcal S\in \mathcal W}|\mathcal S\cap \mathcal C|.\end{equation}

We now express the two sums in \eqref{eq1} in different ways. By Claim~\ref{cla2}, the family $\mathcal C$ can be partitioned as $\mathcal{C} = \mathcal{C}_0 \cup \mathcal{C}_1 \cup \mathcal{C}_2$, 
where $\mathcal{C}_i$ denotes the subfamily of triangles contained in precisely $i$ non-trivial strips. 
We have \begin{equation}\label{eq123} \sum_{\mathcal S\in \mathcal W}|\mathcal S\cap \ff| = 2|\ff\setminus (\mathcal C_1\cup \mathcal C_0)|+|\ff\cap \mathcal C_1|.\end{equation} 
At the same time, we have $\sum_{\mathcal S\in \mathcal W}|\mathcal S\cap \mathcal C| = 2|\mathcal C_2|+|\mathcal C_1|.$
Plugging this into \eqref{eq1}, we get
\begin{equation*}
  2|\ff| \overset{\eqref{eq123}}{=} \sum_{\mathcal S\in \mathcal W}|\mathcal S\cap \ff|+|\ff\cap \mathcal C_1|+2|\ff\cap \mathcal C_0|\le \sum_{\mathcal S\in \mathcal W}|\mathcal S\cap \mathcal C|+|\mathcal C_1|+2|\mathcal C_0| = 2|\mathcal C|,
  \end{equation*}
\subsection{Second proof of Theorem 1} \label{proof3} 
Consider an intersecting family $\mathcal F$ spanned by $V$. We are going to inductively modify $\mathcal F$ by deleting and adding triangles such that
(i) at each step the family stays intersecting and (ii) at each step we add at least as many triangles as we delete. The process will terminate when we reach a subfamily of the family $\mathcal C$ of all triangles containing $x$.

If $\mathcal F \subset \mathcal C$ then we are done, so we may assume $\mathcal{F} \not\subset \mathcal{C}$, which means that at least one triangle in $\mathcal{F}$ does not contain $x$. Let $T\in \mathcal F$ be a triangle that has the largest distance from $x$ and let $ab$, where $a,b\in K_n$, be the side of $T$ that is closest to $x$. 
Let $\mathcal F_{ab}\subset \mathcal F$ denote the subfamily of all triangles in $\mathcal F$ that have side $ab$. Note that every triangle in $\mathcal{F}\setminus\mathcal{F}_{ab}$ contains a point $y$ in the relative interior $\mbox{relint}(ab)$ of $ab$.
This follows from the assumption that $\mathcal{F}$ is intersecting and since $T$ maximizes the distance from $x$. 
We define a new family $\mathcal F' = (\mathcal F\setminus \mathcal F_{ab})\cup \mathcal G_{ab},$ where $\mathcal G_{ab}$ is the 
family of all triangles that have side $ab$ and do not intersect the interior of $T$ (i.e., their third vertex is lying on the same side of $ab$ as $x$). 

We make two simple observations. First, there are at least as many points of $K_n$ on the side of $ab$ that contains $x$ as there points of $K_n$ on the side containing the third vertex of $T$. Thus, $|\mathcal F'|\ge |\mathcal F|.$ Second, we observe that $\mathcal F'$ is intersecting. Indeed, the triangles in $\mathcal G_{ab}$ all share an interior point, say, close to the midpoint of $ab$. Moreover, as we noted above, any triangle in $\mathcal{F} \setminus \mathcal{F}_{ab}$ contains some point $y \in \mbox{relint}(ab)$ in its interior, and thus contains any point sufficiently close to $y$ in its interior. But any point sufficiently close to $y$ and lying on the same side of $ab$ as $x$ is also contained in the interior of any triangle in $\mathcal F_{ab}'.$

By performing this replacement procedure sufficiently many times, the initial family $\mathcal F$ eventually transforms into a subfamily of $\mathcal C$. Indeed, it is not difficult to see that, for any triangle in $\mathcal G_{ab}$ the side $ab$ is not the closest to $x$. Due to this and the extremeness of the choice of $ab$, the side $ab$ cannot appear twice in this process, and so the number of replacement steps is at most ${n\choose 2}$. At the same time, the size of the family does not decrease at any step. This shows that $|\mathcal F|\le |\mathcal C|.$
  
\subsection{Proof of Theorem~\ref{thm3}} \label{proof2}

Now we give a proof of Theorem \ref{thm3}, which also provides yet another proof of Theorem \ref{thm1}. We have shown in the introduction that there is an intersecting family of size $F(n)$, and thus we only need to show the upper bound.
First observe that the theorem holds holds trivially for $n=3$, and is easily verified for $n=4$. We will proceed by induction on $|P|$. Let $P$ be a set of $n>4$ points in convex position, and let $\ff$ be an intersecting family of triangles spanned by $P$. 

Recall that we have a natural cyclic order on $P$ induced by convex position. For the proof, an {\em arc} is a subset of $P$ that consists of consecutive points in this order. For two points $p_1,p_2\in P$, we denote by $A_{p_1p_2}$ the arc with endpoints $p_1,p_2$ and that goes clockwise from $p_1.$ We also use the following convention: the vertices of any triangle $pqr$ ($p,q,r\in P$) used in this proof are given in clockwise order, that is, we meet $q$ before $r$ when going clockwise from $p$.

For a point $p\in P$, consider the family $\ff(p)\subset \ff$ of triangles with vertex $p$. We may assume that $\ff(p)$ is nonempty for any $p\in P$, otherwise we can simply apply induction to $P\setminus \{p\}$.  
Given a triangle  $pqr\in \ff(p)$, its side $qr$ can be identified with $A_{qr}\subset P\setminus \{p\}$. 
Let $\aaa(p) = \{A_{qr}: pqr\in \ff(p)\}$ be the collection of such arcs over all triangles in $\ff(p)$.  
Since $\ff$ is an intersecting family, each pair of arcs from $\aaa(p)$ 
share at least two consecutive  points from $P\setminus\{p\}$. Thus, there is an arc  $C(p)\subset P\setminus \{p\}$ consisting of at least two points, that is contained in  all arcs from $\aaa(p)$. (This follows from Helly's theorem in dimension one.)

\begin{cla}\label{lem1}
  There exists a pair of points $p,q\in P$, such that $p\in C(q)$ and $q\in C(p)$.
\end{cla}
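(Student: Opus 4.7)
The plan is to identify an extremal triangle $T^* = pqr \in \ff$ (vertices in clockwise order) whose two vertices $p$ and $q$ will witness the claim. For $T \in \ff$, define the \emph{spread} of $T$ to be the maximum size of the three open arcs of $P$ cut out by the sides of $T$. Let $T^*$ be a triangle of minimum spread in $\ff$, and, after relabelling cyclically, assume the longest arc, of size $a$, is the arc $A$ going from $p$ clockwise to $q$ (not containing $r$). I claim that the pair $(p,q)$ works.

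To show $q \in C(p)$, observe that $C(p) = [L(p), R(p)]$ in the linear order on $P \setminus \{p\}$ obtained by cutting the cycle at $p$, where $L(p)$ is the maximum over triangles $pq'r' \in \ff(p)$ of the position of $q'$, and $R(p)$ is the minimum position of $r'$. Clearly $L(p) \geq q$ since $T^*$ contributes $q' = q$. Suppose for contradiction that $L(p) > q$: then some triangle $pq'r' \in \ff$ would have $q'$ strictly past $q$ in $p$'s clockwise order, so its arc from $p$ to $q'$ would strictly contain $A \cup \{q\}$ and thus have size at least $a+1$, giving spread greater than $a$ and contradicting the minimality of $T^*$. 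Hence $L(p) = q$, so $q \in C(p)$.

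The argument for $p \in C(q)$ is symmetric. Viewed from $q$ clockwise, $T^*$ reads $(q,r,p)$, so $R(q) \leq p$. If some triangle $qr's' \in \ff$ had $s'$ strictly before $p$ in $q$'s clockwise order, then its arc from $s'$ clockwise to $q$ would contain $\{p\} \cup A$, hence would have size at least $a+1$, again exceeding the spread of $T^*$, contradiction. Thus $R(q) = p$, and $p \in C(q)$.

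The main obstacle is hitting on the right extremal notion; once the concept of spread is in hand, the two halves are parallel. The essential insight is that the longest arc $A$ of a minimum-spread triangle simultaneously blocks $L(p)$ from moving past $q$ and $R(q)$ from moving past $p$, since extending past either endpoint of $A$ would produce a strictly larger arc and hence a strictly larger spread.
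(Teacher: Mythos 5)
There is a genuine error in the extremal choice: you take $T^*$ of \emph{minimum} spread, but the ``contradiction'' you then derive is that some other triangle of $\ff$ has spread strictly greater than $a$ --- which is perfectly consistent with $T^*$ having the minimum spread. The argument only closes if $T^*$ is chosen to have \emph{maximum} spread, i.e.\ if one maximizes the size of the largest cut-off arc over all triangles of $\ff$ and all three sides (this is exactly what the paper does by maximizing $|A_{pq}|$): then a triangle $pq'r'\in\ff(p)$ with $q'$ strictly past $q$ would exhibit an arc of size at least $a+1$, genuinely contradicting maximality. With the minimum-spread choice the claim can actually fail for your pair $(p,q)$. Take $n=10$, points $1,\dots,10$ in clockwise convex position, and $\ff=\{1\,4\,8,\ 4\,9\,2\}$ (vertices in clockwise order); these two triangles share the vertex $4$ and their angular sectors at $4$ overlap, so $\ff$ is intersecting. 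The unique minimum-spread triangle is $1\,4\,8$ (arc sizes $2,3,2$, spread $3$), and its unique longest open arc is $\{5,6,7\}$, which gives $p=4$, $q=8$. But $C(4)=\{8,9,10,1\}\cap\{9,10,1,2\}=\{9,10,1\}$ does not contain $8$, so $q\notin C(p)$. Here $L(4)=9>8$, realized by the triangle $4\,9\,2$ of spread $4$, and no contradiction with minimality arises. The maximum-arc choice instead selects the side $49$ of $4\,9\,2$ and yields the valid pair $(4,9)$.

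The rest of your write-up is sound: reducing $q\in C(p)$ to $L(p)=q$ (using $L(p)\le R(p)$ because $C(p)\neq\emptyset$) correctly absorbs the case in which some $r'$ precedes $q$, and the second half is indeed symmetric. Once the extremal direction is flipped from minimum to maximum, your proof coincides with the paper's. But as written the proof is not correct.
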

\begin{proof}
Choose any triangle $pqr\in \ff$ for which  $|A_{pq}|$ is maximized. We show that $p$ and $q$ satisfy the assertion of the claim.  Consider a triangle $pq'r'\in \ff(p)$. If $q\notin A_{q'r'}$  then the clockwise order on these $5$ points is either $pq'r'q\ldots$ or $pqq'\ldots$. In the former case, the triangles $pqr$ and $pq'r'$ have disjoint interiors. In the latter case, we have $|A_{pq'}|>|A_{pq}|$. Both cases give a contradiction. This implies that $q\in A_{q'r'}$, and thus $q\in C(p)$. A symmetric argument shows that $p\in C(q)$. (See Figure \ref{fig2} for an illustration.)
\end{proof}

\begin{figure}
  \centering
\begin{tikzpicture}[scale = 0.5]

\begin{scope}[yscale = .85]
\fill[blue, opacity = .25] ({360*1/21}:4) -- ({360*8/21}:4) -- ({360*13/21}:4);
\fill[blue, opacity = .25] ({360*1/21}:4) -- ({360*15/21}:4) -- ({360*18/21}:4);

\foreach \x in {0,...,20}
\filldraw ({360*\x/21}:4) circle (3pt);

\node at ({360*8/21}:4.6) { $r$};
\node at ({360*1/21}:4.6) { $p$};
\node at ({360*13/21}:4.6) { $q$};
\node at ({360*18/21}:4.6) { $q'$};
\node at ({360*15/21}:4.6) { $r'$};
\end{scope}

\begin{scope}[xshift = 14cm, yscale = .85]
\fill[blue, opacity = .25] ({360*1/21}:4) -- ({360*8/21}:4) -- ({360*13/21}:4);
\fill[blue, opacity = .25] ({360*1/21}:4) -- ({360*6/21}:4) -- ({360*11/21}:4);

\foreach \x in {0,...,20}
\filldraw ({360*\x/21}:4) circle (3pt);

\node at ({360*8/21}:4.6) { $r$};
\node at ({360*1/21}:4.6) { $p$};
\node at ({360*13/21}:4.6) { $q$};
\node at ({360*11/21}:4.6) { $q'$};
\node at ({360*6/21}:4.6) { $r'$};
\end{scope}
\end{tikzpicture}
  \caption{Illustration for the proof of Claim~\ref{lem1}. On the left: the clockwise order $pq'r'q\dots$ gives triangles with disjoint interiors. On the right: the clockwise order $p,q,q'r'\dots$ gives $|A_{pq'}|> |A_{pq}|$.}\label{fig2}
\end{figure}
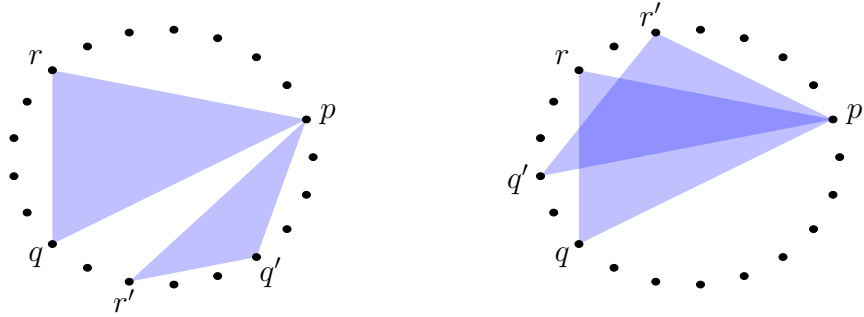

Consider a pair of points $p,q\in P$ guaranteed by Lemma~\ref{lem1}. Then $\ff(p)\cup \ff(q)= \ff_1\cup \ff_2\cup \ff_3$, where \begin{itemize}\item $\ff_1$ is the family of all triangles from $\ff$ with vertices $p$ and $q$;
\item $\ff_2$ ($\ff_3$) is the family of all triangles from $\ff(p)$ ($\ff(q)$), whose two vertices different from $p$ ($q$)  are separated by the line $\ell$ passing through $p$ and $q$.\end{itemize} Assume that $\ell$ contains $a$ points from $P$ on one side and $b$ points on the other, where $a+b = n-2$ and $a\le b$. We cannot simultaneously include in $\ff_1$ two triangles that have their third vertex on $A_{pq}$ and $A_{qp},$ respectively, because the interiors of these triangles do not intersect. Thus, $|\ff_1|\le b$. Similarly, for any pair of points $x,y$ lying on the opposite sides of the line connecting $p$ and $q$, the triangles $xyq$ and $xyp$ have disjoint interiors and thus only one of them may belong to $\ff$. Thus, $|\ff_2|+|\ff_3|\le ab$. Putting these bounds together and using $(a+1)+b = n-1,$
we get that \begin{equation}\label{eq9} |\ff(p)\cup \ff(q)|\le  (a+1)b\le \Big\lceil\frac{n-1}2\Big\rceil\Big\lfloor\frac{n-1}2\Big\rfloor.\end{equation}
It is easy to check for both odd and even $n$ that the last expression is equal to $\binom{\lceil \frac{n}{2} \rceil}{2} + \binom{\lfloor \frac{n}{2} \rfloor}{2}$. Indeed, for odd $n$ the right hand side in \eqref{eq9} is $(n-1)^2/4$, while $\binom{\lceil \frac{n}{2} \rceil}{2} + \binom{\lfloor \frac{n}{2} \rfloor}{2} = \frac{\frac{n+1}2\frac{n-1}2}2+\frac{\frac{n-1}2\frac{n-3}2}2 = (n-1)^2/4$. For even $n$  the right hand side in \eqref{eq9} is $\frac{n(n-2)}4$, while $\binom{\lceil \frac{n}{2} \rceil}{2} + \binom{\lfloor \frac{n}{2} \rfloor}{2} = 2{\frac n2\choose2} = \frac{n(n-2)}4$.
Applying inductive hypothesis to $P\setminus \{p,q\}$, we get that
\[|\ff| \leq \binom{\big\lceil \frac{n}{2} \big\rceil}{3} +
\binom{\big\lfloor \frac{n}{2} \big\rfloor}{3} +
\binom{\big\lceil \frac{n}{2} \big\rceil}{2} + \binom{\big\lfloor \frac{n}{2} \big\rfloor}{2} =
\binom{\big\lceil \frac{n+2}{2} \big\rceil}{3} + \binom{\big\lfloor \frac{n+2}{2} \big\rfloor}{3} = F(n).\]
This completes the proof.

\section{Open problems}
Let us first state some open questions concerning the case of intersecting families of triangles.
\begin{pro}
  What is the maximum and the minimum, over all point sets $P$ of size $n$, of the size of the largest intersecting family of triangles spanned by $P$?  Is the maximum always at most $\big(\frac 14 +o(1)\big) \binom{n}{3}$ as $n\to \infty$? Is the minimum 
at most $\big(c +o(1)\big) \binom{n}{3}$ with $c<1/4$ as $n\to \infty$? \end{pro}

\begin{pro}
  What happens if one relaxes the intersecting condition and allows triangles to intersect on the boundary?
\end{pro}
\begin{pro}
Find analogues of our results for other classes of sets such as convex $k$-gons  in $\R^2$.
\end{pro}
Finally, one may also ask similar questions in higher dimensions. For instance, given $n$ points in general position in $\mathbb{R}^d$ it is known (see for instance \cite[Remark 4.2]{uli}) that the number of $d$-simplices containing the origin in its interior is at most 
\[F_d(n) = \binom{\lfloor\frac{n+2}{2}\rfloor}{d+1} + \binom{\lceil\frac{n+2}{2}\rceil}{d+1}.\] 
\begin{pro}
  Let $P$ be a set of points in general position in $\R^d$  (or on the unit sphere in $\mathbb{R}^d$) and let $\ff$ be a family of $d$-simplices spanned by $P$ such that any $d$-tuple of them have a common point in their interior. Is it true that $|\ff| \leq F_d(n)$?
\end{pro}
Instead of requiring every $d$-tuple have a point in common interior point, it also makes sense to ask that every $t$-tuple have a common interior point for some fixed  $1<t\leq d$.\\

{\bf Remark. } Since the publication of the paper on ArXiv, there was an exciting progress on the problems mentioned above. First, F\"uredi et al. \cite{FM} resolved a part of Problem~1 which asks for the maximum size of an intersecting family of triangles for any set. They showed that the maximum is indeed at most $(\frac 14+o(1)){n\choose 3}$. They have also sketched a possible extension of these results to Problem 3. Second, Barnab\'as Janzer in an e-mail exchange provided a construction of a planar point set with the largest intersecting family having size at most $(c+o(1)){n\choose 3}$, providing an answer to another part of Problem 1. So far, the best possible value of c remains unknown.

\paragraph{Acknowledgement} This research was done while the first and third authors were visiting KAIST.


\begin{thebibliography}{100}
\bibitem{barany} I.~B{\'a}r{\'a}ny, 
A generalization of Carath{\'e}odory's theorem, Discrete Math 40 (1982) 141--152.

\bibitem{boros} E.~Boros, Z.~F{\"u}redi, The number of triangles covering the center of an $n$-set, Geom. Dedicata 17 (1984) 69--77.

\bibitem{bukh} B.~Bukh, A point in many triangles, Electron J Comb 13 (2006).

\bibitem{nivasch} B,~Bukh, J.~Matou{\v s}ek, G.~Nivasch, Stabbing simplices by points and flats, Discrete Comput Geom 43 (2010) 321--338.

\bibitem{perms} P.~Frankl, M.~Deza, On the maximum number of permutations with given maximal or minimal distance,
J Combin Theory Ser A 22 (1977) 352--360.

\bibitem{FM} Z. F\"uredi, D. Mubayi, J. O’Neill, J. Verstra\"ete, {\it Extremal problems for pairs of triangles}, arXiv:2010.11100 (2020).

\bibitem{EKR} P.~Erd{\H o}s, C.~Ko, R.~Rado, Intersection theorems for systems of finite sets, The Q. J. Math 12 (1961) 313--320.

\bibitem{vector} P.~Frankl, R.~M.~Wilson, The Erdős--Ko--Rado theorem for vector spaces, J Comb Theory Ser A 43 (1986) 228--236. 

\bibitem{gromov} M.~Gromov, Singularities, expanders and topology of maps. Part 2: from combinatorics to topology via algebraic isoperimetry, Geom Funct Anal 20 (2010) 416--526.

\bibitem{hsieh} W.~N.~Hsieh, 
Intersection theorems for systems of finite vector spaces,
Discrete Math 12 (1976) 1--16.

\bibitem{karasev} R.~Karasev, A Simpler Proof of the Boros--F{\"u}redi--B{\'a}r{\'a}ny--Pach--Gromov Theorem, Discrete Comput Geom 47 (2012) 492--495.

\bibitem{uli} U.~Wagner, E.~Welzl, A Continuous Analogue of the Upper Bound Theorem, Discrete Comput Geom 26 (2001) 205--219.


\end{thebibliography}
\end{document}